\begin{document}
\newcommand{\dyle}{\displaystyle}
\newcommand{\R}{{\mathbb{R}}}
\newcommand{\Hi}{{\mathbb H}}
\newcommand{\Ss}{{\mathbb S}}
\newcommand{\N}{{\mathbb N}}
\newcommand{\Rn}{{\mathbb{R}^n}}
\newcommand{\ieq}{\begin{equation}}
\newcommand{\eeq}{\end{equation}}
\newcommand{\ieqa}{\begin{eqnarray}}
\newcommand{\eeqa}{\end{eqnarray}}
\newcommand{\ieqas}{\begin{eqnarray*}}
\newcommand{\eeqas}{\end{eqnarray*}}
\newcommand{\Bo}{\put(260,0){\rule{2mm}{2mm}}\\}


\theoremstyle{plain}
\newtheorem{theorem}{Theorem} [section]
\newtheorem{corollary}[theorem]{Corollary}
\newtheorem{lemma}[theorem]{Lemma}
\newtheorem{proposition}[theorem]{Proposition}
\def\neweq#1{\begin{equation}\label{#1}}
\def\endeq{\end{equation}}
\def\eq#1{(\ref{#1})}


\theoremstyle{definition}
\newtheorem{definition}[theorem]{Definition}
\newtheorem{remark}[theorem]{Remark}
\numberwithin{figure}{section}

\title[Comparison between Skorokhod \& Russo-Vallois integration]{A simple comparison between Skorokhod \& \\
Russo-Vallois integration for insider trading}

\author[C. Escudero]{Carlos Escudero}
\address{}
\email{}

\keywords{Insider trading, Skorokhod integral, Russo-Vallois forward integral, anticipating stochastic calculus.
\\ \indent 2010 {\it MSC: 60H05, 60H07, 60H10, 60H30, 91G80.}}

\date{\today}

\begin{abstract}
We consider a simplified version of the problem of insider trading in a financial market. We approach it by means of anticipating
stochastic calculus and compare the use of the Skorokhod and the Russo-Vallois forward integrals within this context.
We conclude that, while the forward integral yields results with a clear financial meaning, the Skorokhod integral
does not provide a suitable formulation for this problem.
\end{abstract}
\maketitle

\section{Introduction}

The stochastic differential equation
\begin{equation}\label{white}
\frac{dx}{dt}= f(x) + g(x) \, \xi(t),
\end{equation}
where $\xi(t)$ is a ``white noise'', is a mathematical model with applications in many disciplines~\cite{hl}. The precise meaning
of this equation is found via the introduction of a suitable stochastic integral, that can be either It\^o:
\begin{equation}\label{ito}
dx = f(x) \, dt + g(x) \, dB_t,
\end{equation}
where $B_t$ is a Brownian motion, Stratonovich:
\begin{equation}\label{str}
dx = f(x) \, dt + g(x) \circ dB_t,
\end{equation}
or yet another option~\cite{k}. The mathematical theory for stochastic differential equations of It\^{o} or Stratonovich type has been
constructed~\cite{oksendal} and both problems are shown to be well-posed under reasonable conditions, then minimizing from a pure mathematics
viewpoint the difference between them. However, from an applied viewpoint the difference between equations~\eqref{ito} and~\eqref{str} can be dramatic,
as both may lead to radically different dynamics~\cite{hl}. Which \emph{interpretation of noise} is chosen depends on modeling, that is,
on the particular application which mathematical treatment leads to equation~\eqref{white}. Perhaps because of this, a vast literature
on which is the right interpretation does exist~\cite{mmcc}.

One of the main applications of the theory of stochastic differential equations is the study of financial markets.
Let us consider a classical financial market with one asset free of risk (the bond)
\begin{eqnarray}\label{s0}
dS_0 &=& \rho \, S_0 \, dt, \\ \nonumber
S_0(0) &=& M_0,
\end{eqnarray}
and a risky asset (the stock) modeled by geometric Brownian motion
\begin{eqnarray}\label{s1}
dS_1 &=& \mu \, S_1 \, dt + \sigma \, S_1 \, dB_t, \\ \nonumber
S_1(0) &=& M_1,
\end{eqnarray}
where the constants $M_0, M_1, \rho, \mu, \sigma \in \mathbb{R}^+ := ]0,\infty[$ have the following financial meaning:
\begin{itemize}
\item $M_0$ is the initial wealth to be invested in the bond.
\item $M_1$ is the initial wealth to be invested in the stock.
\item $\rho$ is the interest rate of the bond.
\item $\mu$ is the appreciation rate of the stock.
\item $\sigma$ is the volatility of the stock.
\end{itemize}
The total initial wealth is $M = M_0+M_1$ and we assume that $\mu > \rho$. We consider the trader possesses
a fixed total initial wealth $M$ at the initial time $t=0$ and is free to choose what fraction of it, $M_0$ and $M_1$, is invested in each asset.
Clearly, at any time $t>0$, the total wealth is given by
$$
S(t)=S_0(t)+S_1(t).
$$
We will consider this financial market on $[0,T]$ for a fixed future time $T>0$.
Then we have the following result.

\begin{theorem}\label{inith}
The expected value of the total wealth at time $T$ is
\begin{equation}\nonumber
\mathbb{E}[S(T)] = M_0 \, e^{\rho T} + M_1 \, e^{\mu T}.
\end{equation}
\end{theorem}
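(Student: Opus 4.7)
The plan is to treat the two assets separately, exploiting linearity of expectation, and then use the explicit solutions (or equivalently the martingale property of the It\^{o} integral) to evaluate each piece.

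First I would handle the bond. Equation~\eqref{s0} is a deterministic linear ODE whose unique solution is $S_0(t) = M_0 \, e^{\rho t}$. Since this is non-random, $\mathbb{E}[S_0(T)] = M_0 \, e^{\rho T}$ is immediate.

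Next I would handle the stock. Equation~\eqref{s1} is the classical geometric Brownian motion SDE, whose unique strong solution is $S_1(t) = M_1 \, \exp\!\left((\mu-\sigma^2/2)\,t + \sigma B_t\right)$, as can be verified by It\^o's formula applied to $\log S_1$. Taking expectation and using the well-known moment generating function of $B_t \sim \mathcal{N}(0,t)$, namely $\mathbb{E}[e^{\sigma B_t}] = e^{\sigma^2 t/2}$, one obtains $\mathbb{E}[S_1(T)] = M_1 \, e^{\mu T}$. Alternatively, I would write~\eqref{s1} in integral form $S_1(t) = M_1 + \mu \int_0^t S_1(s)\,ds + \sigma \int_0^t S_1(s)\,dB_s$, note that the It\^o integral is a martingale with zero expectation (the integrand $S_1$ is adapted and square integrable on $[0,T]$), take expectations to get the ODE $m'(t) = \mu\, m(t)$ with $m(0)=M_1$ for $m(t):=\mathbb{E}[S_1(t)]$, and conclude $m(T) = M_1 e^{\mu T}$.

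Finally, by linearity of expectation,
\[
\mathbb{E}[S(T)] = \mathbb{E}[S_0(T)] + \mathbb{E}[S_1(T)] = M_0 \, e^{\rho T} + M_1 \, e^{\mu T}.
\]
There is no real obstacle here; the only point requiring a moment of care is the vanishing of the expectation of the stochastic integral, which relies on the standard integrability of $S_1$ on $[0,T]$ so that the It\^o integral is a genuine martingale rather than merely a local martingale.
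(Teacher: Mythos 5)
Your proposal is correct and its main route --- solving \eqref{s0} and \eqref{s1} explicitly, then taking expectations using $\mathbb{E}[e^{\sigma B_T}]=e^{\sigma^2 T/2}$ --- is exactly the argument the paper gives. The alternative martingale/ODE argument you sketch is a valid variant but not needed; the explicit-solution computation already suffices.
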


\begin{proof}
Using It\^{o} calculus we solve equations~\eqref{s0} and~\eqref{s1} to find
\begin{eqnarray}\nonumber
S_0(t) &=& M_0 \, e^{\rho t}, \\ \nonumber
S_1(t) &=& M_1 \exp \left\{ \left( \mu -\frac{\sigma^2}{2} \right) t + \sigma B_t \right\},
\end{eqnarray}
and then the expectation of $S(t)$ at time $t=T$ is
\begin{eqnarray}\nonumber
\mathbb{E}[S(T)] &=& \mathbb{E}[S_0(T)] + \mathbb{E}[S_1(T)] \\ \nonumber
&=& M_0 \, \mathbb{E}\left[e^{\rho T}\right] +
M_1 \, \mathbb{E}\left[\exp \left\{ \left( \mu -\frac{\sigma^2}{2} \right) T + \sigma B_T \right\}\right] \\ \nonumber
&=& M_0 \, e^{\rho T} + M_1 \, e^{\mu T}.
\end{eqnarray}
\end{proof}

Any trader that wants to maximize the expected wealth at time $T$ should obviously choose the strategy
$$
M_0 = 0, \qquad M_1 = M,
$$
what in turn yields the maximal expected wealth
$$
\mathbb{E}[S(T)] = M e^{\mu T}.
$$

Some remarks are now in order.
First of all, this maximization problem may be regarded as a toy model for the Merton portfolio optimization problem~\cite{merton}.
Indeed, everything here becomes simplified due to the absence of a utility function modeling risk aversion. This function has not been introduced
for two reasons: to keep our approach and results as simple as possible, and also for some modeling reasons that will be specified in the next
section. Additionally, it is important to remark that problem~\eqref{s1} represents an easy example of the resolution of the
\emph{It\^{o} versus Stratonovich dilemma} referred to in the first paragraph of this Introduction.
In our modeling of the stock price evolution we assumed that $\mu$ is the expected rate of return of the risky asset. Therefore,
this assumption together with the martingale property of the It\^o integral, they impose unambiguously that~\eqref{s1} is
an It\^o stochastic differential equation. Things will be different in the next section, in which the trader will be assumed to posses
at time $t=0$ additional information with respect to the one contained in the filtration generated by $B_t$.

\section{Insider trading with full information}

The problem of discerning the strategies of a dishonest trader who possesses privileged information in a financial market, ``the insider'',
is a venerable one in the field of stochastic analysis applied to finance~\cite{bo,noep,jyc,leon,nualart,pk}
and continues to be of current interest~\cite{do1,do2,do3}.
Within this work, a much simplified version of this problem is considered, as our goal is to favor the accessibility to the comparison
between the two anticipating stochastic integrals in the context of finance.

Consider now that, contrary to the situation in the previous section, our trader is an insider with full information on the future price of the stock.
Precisely, the insider trader knows already at the initial time $t=0$ what will the value $S_1(T)$ be. Then
the chosen strategy should be different:
$$
M_0 = M \, \mathlarger{\mathlarger{\mathbbm{1}}} \{\bar{S}_1(T) \le \bar{S}_0(T)\},
\qquad M_1 = M \, \mathlarger{\mathlarger{\mathbbm{1}}} \{\bar{S}_1(T) > \bar{S}_0(T)\},
$$
for
\begin{eqnarray}\nonumber
d\bar{S}_0 &=& \rho \, \bar{S}_0 \, dt, \\ \nonumber
\bar{S}_0(0) &=& 1,
\end{eqnarray}
and
\begin{eqnarray}\nonumber
d\bar{S}_1 &=& \mu \, \bar{S}_1 \, dt + \sigma \, \bar{S}_1 \, dB_t, \\ \nonumber
\bar{S}_1(0) &=& 1,
\end{eqnarray}
that is, the insider always bets the most profitable asset. It is then natural to ask what would
be the expected wealth of the insider at time $T$. Note again that we are not considering any utility function
modeling risk aversion. This is, as mentioned in the Introduction, in part for the sake of simplicity and in part
for modeling reasons: it is not clear what the role of risk aversion should be in the case of an insider with full information
on the future value of the stock. In order to answer this question we note that, while
the initial value problem
\begin{subequations}
\begin{eqnarray}\label{rode1}
dS_0 &=& \rho \, S_0 \, dt, \\ \label{rode2}
S_0(0) &=& M \, \mathlarger{\mathlarger{\mathbbm{1}}} \{\bar{S}_1(T) \le \bar{S}_0(T)\},
\end{eqnarray}
\end{subequations}
is an ordinary differential equation with a random initial condition, the problem
\begin{eqnarray}\nonumber
dS_1 &=& \mu \, S_1 \, dt + \sigma \, S_1 \, dB_t, \\ \nonumber
S_1(0) &=& M \, \mathlarger{\mathlarger{\mathbbm{1}}} \{\bar{S}_1(T) > \bar{S}_0(T)\},
\end{eqnarray}
is ill-posed as an It\^o stochastic differential equation. This is because the initial condition is anticipating, and this
anticipating character will propagate into the solution, therefore giving rise to the It\^o integral of a non-adapted integrand, which is
of course meaningless. One way to circumvent this pitfall is replacing the It\^o integral in our model by one of its generalizations that
admit non-adapted integrands. Two possibilities are the Skorokhod integral~\cite{skorokhod} and the Russo-Vallois forward integral~\cite{russovallois}.
Both integrals reduce to the It\^o one when the integrand is adapted, but are different in general~\cite{noep}.

Using established notation~\cite{noep}, and choosing the Skorokhod integral,
we arrive at the initial value problem
\begin{subequations}
\begin{eqnarray} \label{sko1}
\delta S_1 &=& \mu \, S_1 \, d t + \sigma \, S_1 \, \delta B_t \\ \label{sko2}
S_1(0) &=& M \, \mathlarger{\mathlarger{\mathbbm{1}}} \{\bar{S}_1(T) > \bar{S}_0(T)\},
\end{eqnarray}
\end{subequations}
for a Skorokhod stochastic differential equation.
Analogously, when the choice is the Russo-Vallois integral, we face the initial value problem
\begin{subequations}
\begin{eqnarray} \label{rv1}
d^- S_1 &=& \mu \, S_1 \, dt + \sigma \, S_1 \, d^- B_t \\ \label{rv2}
S_1(0) &=& M \, \mathlarger{\mathlarger{\mathbbm{1}}} \{\bar{S}_1(T) > \bar{S}_0(T)\},
\end{eqnarray}
\end{subequations}
for a forward stochastic differential equation. As happened with the It\^o versus Stratonovich dilemma described in the Introduction, it is
in principle possible to choose either equation~\eqref{sko1} or~\eqref{rv1} to address the problem at hand. As in this classical situation,
both equations~\eqref{sko1} and~\eqref{rv1} are well-founded theoretically~\cite{lssdes,noep,leon,nualart},
so only the particular applications will dictate which is
the ``right interpretation of noise''. Since we are addressing a financial problem, we will unveil the right choice in this concrete case.
To this end we need the following result that describes the time behavior of systems~\eqref{sko1}-\eqref{sko2} and~\eqref{rv1}-\eqref{rv2}.
We remind the reader that the total wealth of the insider is still given by
$$
S(t)=S_0(t)+S_1(t).
$$

\begin{theorem}\label{mainth}
The expected value of the total wealth of the insider at time $t=T$ is
\begin{eqnarray}\nonumber
\mathbb{E}[S(T)] &=& \frac{M}{2} \left\{ 1 + \text{erf} \left[ \frac{(\sigma^2 + 2 \rho - 2 \mu)\sqrt{T}}{2 \sqrt{2} \, \sigma} \right] \right\} e^{\rho T}
\\ \nonumber & & + \,
\frac{M}{2} \left\{ 1 - \text{erf} \left[ \frac{(\sigma^2 + 2 \rho - 2 \mu)\sqrt{T}}{2 \sqrt{2} \, \sigma} \right] \right\} e^{\mu T},
\end{eqnarray}
for model~\eqref{rode1}-\eqref{rode2} and~\eqref{sko1}-\eqref{sko2}, while it is
\begin{eqnarray}\nonumber
\mathbb{E}[S(T)] &=& \frac{M}{2} \left\{ 1 + \text{erf} \left[ \frac{(\sigma^2 + 2 \rho - 2 \mu)\sqrt{T}}{2 \sqrt{2} \, \sigma} \right] \right\} e^{\rho T}
\\ \nonumber & & + \,
\frac{M}{2} \left\{ 1 + \text{erf} \left[ \frac{(\sigma^2 - 2 \rho + 2 \mu)\sqrt{T}}{2 \sqrt{2} \, \sigma} \right] \right\} e^{\mu T},
\end{eqnarray}
for model~\eqref{rode1}-\eqref{rode2} and~\eqref{rv1}-\eqref{rv2},
where
$$
\text{erf}\,(\cdot)= \frac{2}{\sqrt{\pi}} \int_0^\cdot \! e^{-x^2} \, dx
$$
is the error function.
\end{theorem}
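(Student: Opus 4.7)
The plan is to reduce everything to one-dimensional Gaussian integrals. From Theorem~\ref{inith} applied to the normalized benchmarks one has $\bar{S}_0(T) = e^{\rho T}$ and $\bar{S}_1(T) = \exp\{(\mu-\sigma^2/2)T + \sigma B_T\}$, so the event $A := \{\bar{S}_1(T) > \bar{S}_0(T)\}$ is exactly $\{B_T > c\}$ with $c := (\sigma^2 + 2\rho - 2\mu)T/(2\sigma)$. Solving the ODE~\eqref{rode1}--\eqref{rode2} yields $S_0(T) = M\,\mathbf{1}_{A^c}\,e^{\rho T}$, whose expectation equals $M e^{\rho T}\,\Phi(c/\sqrt{T})$, and the identity $\Phi(z) = (1+\text{erf}(z/\sqrt{2}))/2$ immediately produces the first summand in both displayed formulas. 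The remaining task is thus to compute $\mathbb{E}[S_1(T)]$ under each anticipating interpretation.

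For the Russo--Vallois equation~\eqref{rv1}--\eqref{rv2}, the forward integral obeys the classical chain rule when the integrand factors as $F\cdot G_t$ with $F$ an $\mathcal{F}_T$-measurable random variable and $G$ an It\^o semimartingale, so I would directly verify that
$$
S_1(t) \;=\; M\,\mathbf{1}_A\,\exp\{(\mu - \sigma^2/2)\,t + \sigma B_t\}
$$
solves~\eqref{rv1}--\eqref{rv2}. Inserting this in $\mathbb{E}[S_1(T)]$ leaves a Gaussian integral which, after completing the square in the exponent and shifting the integration variable by $\sigma T$, reduces to $M e^{\mu T}\,\mathbb{P}(B_T > c - \sigma T)$; rewriting this probability in $\text{erf}$ notation reproduces the second line of the Russo--Vallois formula.

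For the Skorokhod equation~\eqref{sko1}--\eqref{sko2}, the non-adaptedness of the initial condition interacts with $\delta B$ through the Malliavin derivative and the naive product above is no longer a solution. The structural fact I would invoke (established for linear Skorokhod SDEs with anticipating initial data by Buckdahn and by Nualart--Pardoux, and contained in the references cited in the excerpt) is that the initial condition must be translated along the Cameron--Martin direction generated by the driving noise $\sigma\,\delta B$, yielding
$$
S_1(t) \;=\; M\,\mathbf{1}_{\{B_T - \sigma(t\wedge T) > c\}}\,\exp\{(\mu - \sigma^2/2)\,t + \sigma B_t\}.
$$
At $t=T$ the indicator becomes $\mathbf{1}_{\{B_T > c + \sigma T\}}$; the same completion-of-square calculation now produces $M e^{\mu T}\,\mathbb{P}(B_T > c)$, which in $\text{erf}$ notation matches the second line of the Skorokhod formula. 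The main obstacle is precisely this structural representation for the Skorokhod case: once it is in hand, both claims reduce to elementary Gaussian integrations, and the discrepancy between the two formulas traces back entirely to whether or not the $\sigma T$ shift enters the indicator.
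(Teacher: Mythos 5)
Your proposal is correct, and for the bond term and the Russo--Vallois half it coincides with the paper's own argument: the forward equation is solved by the ordinary product $M\,\mathbf{1}_A\exp\{(\mu-\sigma^2/2)t+\sigma B_t\}$ because the forward integral preserves It\^o calculus, and the expectation is then the explicit Gaussian integral you describe. Where you genuinely diverge is the Skorokhod half. The paper writes the solution as a Wick product, $S_1(t)=M\,\mathbf{1}_A\diamond\exp\{(\mu-\sigma^2/2)t+\sigma B_t\}$, and extracts the expectation in one line from the factorization property $\mathbb{E}[X\diamond Y]=\mathbb{E}[X]\,\mathbb{E}[Y]$, which immediately yields $M e^{\mu T}\,\mathbb{P}(B_T>c)$. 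You instead invoke the Buckdahn translation representation for linear Skorokhod equations, composing the initial condition with the shift $\omega\mapsto\omega-\sigma(\cdot\wedge t)$ and then evaluating $\mathbb{E}\bigl[\mathbf{1}_{\{B_T>c+\sigma T\}}e^{\sigma B_T-\sigma^2 T/2}\bigr]$ by completing the square. The two representations are the same object --- Gjessing's lemma converts the Wick product with a Wick exponential into exactly your translated ordinary product --- and your shift has the correct sign and magnitude, so both computations land on $M e^{\mu T}\,\mathbb{P}(B_T>c)$. What your route buys is transparency about the origin of the Skorokhod ``paradox'': the $\sigma T$ displacement of the indicator decorrelates it from the favourable paths, which is visible pathwise rather than hidden inside an algebraic identity. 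What it costs is the need to quote Buckdahn's structure theorem (cited in the paper's bibliography, so this is legitimate), whereas the paper's Wick-factorization argument gets the expectation with no Gaussian integration at all.
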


\begin{proof}
Using Malliavin calculus techniques~\cite{noep} it is possible to solve problem~\eqref{sko1}-\eqref{sko2} explicitly to find
$$
S_1(t)= M \, \mathlarger{\mathlarger{\mathbbm{1}}} \{\bar{S}_1(T) > \bar{S}_0(T)\} \diamond
\exp \left\{ \left( \mu -\frac{\sigma^2}{2} \right) t + \sigma B_t \right\},
$$
where $\diamond$ denotes the Wick product~\cite{noep}.
Now, using the factorization property of the expectation of a Wick product of random variables,
we find for the expected wealth at the terminal time:
\begin{eqnarray}\nonumber
\mathbb{E}[S(T)] &=& \mathbb{E}[S_0(T)] + \mathbb{E}[S_1(T)] \\ \nonumber
&=& M \, \mathbb{E}\left[ \mathlarger{\mathlarger{\mathbbm{1}}} \{\bar{S}_1(T) \le \bar{S}_0(T)\} \right] e^{\rho T} \\ \nonumber
& & + \, M \, \mathbb{E} \left[ \mathlarger{\mathlarger{\mathbbm{1}}} \{\bar{S}_1(T) > \bar{S}_0(T)\} \diamond
\exp \left\{ \left( \mu -\frac{\sigma^2}{2} \right) T + \sigma B_T \right\} \right] \\ \nonumber
&=& M \, \mathbb{E} \left[ \mathlarger{\mathlarger{\mathbbm{1}}} \{\bar{S}_1(T) \le \bar{S}_0(T)\}\right] e^{\rho T} \\ \nonumber
& & + \, M \, \mathbb{E} \left[ \mathlarger{\mathlarger{\mathbbm{1}}} \{\bar{S}_1(T) > \bar{S}_0(T)\} \right]
\mathbb{E} \left[\exp \left\{ \left( \mu -\frac{\sigma^2}{2} \right) T + \sigma B_T \right\} \right] \\ \nonumber
&=& M \, \text{Pr}\left\{\bar{S}_1(T) \le \bar{S}_0(T)\right\} e^{\rho T}
+ M \, \text{Pr}\left\{\bar{S}_1(T) > \bar{S}_0(T)\right\} e^{\mu T} \\ \nonumber
&=& M \, \text{Pr}\left\{ B_T \le (\rho -\mu +\sigma^2/2)T/\sigma \right\} e^{\rho T}
+ M \, \text{Pr}\left\{ B_T > (\rho -\mu +\sigma^2/2)T/\sigma \right\} e^{\mu T} \\ \nonumber
&=& \frac{M}{2} \left\{ 1 + \text{erf} \left[ \frac{(\sigma^2 + 2 \rho - 2 \mu)\sqrt{T}}{2 \sqrt{2} \, \sigma} \right] \right\} e^{\rho T}
\\ \nonumber & & + \,
\frac{M}{2} \left\{ 1 - \text{erf} \left[ \frac{(\sigma^2 + 2 \rho - 2 \mu)\sqrt{T}}{2 \sqrt{2} \, \sigma} \right] \right\} e^{\mu T},
\end{eqnarray}
where we have also used that $B_T \sim \mathcal{N}(0,T)$.

Since the forward integral preserves It\^o calculus~\cite{noep}, the solution to problem~\eqref{rv1}-\eqref{rv2} can be computed
using It\^o calculus rules:
$$
S_1(t)= M \, \mathlarger{\mathlarger{\mathbbm{1}}} \{\bar{S}_1(T) > \bar{S}_0(T)\}
\exp \left\{ \left( \mu -\frac{\sigma^2}{2} \right) t + \sigma B_t \right\}.
$$
Therefore the expected wealth at the terminal time in this case is
\begin{eqnarray}\nonumber
\mathbb{E}[S(T)] &=& \mathbb{E}[S_0(T)] + \mathbb{E}[S_1(T)] \\ \nonumber
&=& M \, \mathbb{E}\left[ \mathlarger{\mathlarger{\mathbbm{1}}} \{\bar{S}_1(T) \le \bar{S}_0(T)\} \right] e^{\rho T} \\ \nonumber
& & + \, M \, \mathbb{E} \left[ \mathlarger{\mathlarger{\mathbbm{1}}} \{\bar{S}_1(T) > \bar{S}_0(T)\}
\exp \left\{ \left( \mu -\frac{\sigma^2}{2} \right) T + \sigma B_T \right\} \right] \\ \nonumber
&=& M \, \text{Pr} \{\bar{S}_1(T) \le \bar{S}_0(T)\} \, e^{\rho T} \\ \nonumber
& & + \, M \, \exp \left\{ \left( \mu -\frac{\sigma^2}{2} \right) T \right\}
\mathbb{E} \left[ \mathlarger{\mathlarger{\mathbbm{1}}} \{\bar{S}_1(T) > \bar{S}_0(T)\}
\exp \left\{ \sigma B_T \right\} \right] \\ \nonumber
&=& M \, \text{Pr} \{ B_T \le (\rho -\mu +\sigma^2/2)T/\sigma \} \, e^{\rho T} \\ \nonumber
& & + \, M \, \exp \left\{ \left( \mu -\frac{\sigma^2}{2} \right) T \right\}
\mathbb{E} \left[ \mathlarger{\mathlarger{\mathbbm{1}}} \{ B_T > (\rho -\mu +\sigma^2/2)T/\sigma \}
\exp \left\{ \sigma B_T \right\} \right] \\ \nonumber
&=& \frac{M}{2} \left\{ 1 + \text{erf} \left[ \frac{(\sigma^2 + 2 \rho - 2 \mu)\sqrt{T}}{2 \sqrt{2} \, \sigma} \right] \right\} e^{\rho T}
\\ \nonumber & & + \,
\frac{M}{2} \left\{ 1 + \text{erf} \left[ \frac{(\sigma^2 - 2 \rho + 2 \mu)\sqrt{T}}{2 \sqrt{2} \, \sigma} \right] \right\} e^{\mu T}.
\end{eqnarray}
\end{proof}

\section{Consequences}

The problem of insider trading has been approached by means of the use of the forward integral~\cite{bo,noep,do1,do2,do3,leon,nualart},
but the justification of this choice has been usually made on more technical grounds.
A financial justification of the use of the forward integral was however illustrated in~\cite{bo} with a buy-and-hold strategy.
The following result further supports this choice and it is purely based on a direct comparison between the financial consequences of
employing either integral.

\begin{theorem}
Let us denote by $S^{(\text{i})}(t)$ the total wealth process corresponding to the initial value problems~\eqref{s0} subject to $M_0=0$
and~\eqref{s1} subject to $M_1=M$; denote also
by $S^{(\text{sk})}(t)$ and $S^{(\text{rs})}(t)$ the total wealth processes corresponding to the initial value problems~\eqref{rode1}-\eqref{rode2} and~\eqref{sko1}-\eqref{sko2}, and ~\eqref{rode1}-\eqref{rode2} and~\eqref{rv1}-\eqref{rv2}, respectively. Then
$$
\mathbb{E}[S^{(\text{sk})}(T)] < \mathbb{E}[S^{(\text{i})}(T)] < \mathbb{E}[S^{(\text{rs})}(T)],
$$
for any $M, \rho, \mu, \sigma, T \in \mathbb{R}^+$ with $\mu > \rho$.
\end{theorem}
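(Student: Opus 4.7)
The plan is to avoid grinding through the error-function identities of Theorem~\ref{mainth} and instead work with the probabilistic representations that appear inside its proof. Introduce the abbreviations $A = \{\bar{S}_1(T) > \bar{S}_0(T)\}$, $q = \mathrm{Pr}(A)$, and $Y = \exp\{(\mu - \sigma^2/2)T + \sigma B_T\}$, so that $\bar{S}_1(T) = Y$, $\bar{S}_0(T) = e^{\rho T}$, $\mathbb{E}[Y] = e^{\mu T}$, and (since $B_T$ is Gaussian) $q \in (0,1)$. From Theorem~\ref{inith} we also have $\mathbb{E}[S^{(\mathrm{i})}(T)] = M e^{\mu T}$.

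For the Skorokhod side I would use directly the intermediate identity
\begin{equation*}
\mathbb{E}[S^{(\mathrm{sk})}(T)] = M(1-q)\, e^{\rho T} + M q\, e^{\mu T},
\end{equation*}
which arises from the Wick-factorization step of Theorem~\ref{mainth}. This is a genuine convex combination of $e^{\rho T}$ and $e^{\mu T}$, and since $q \in (0,1)$ and $\mu > \rho$ it is strictly smaller than $M e^{\mu T}$. This already proves $\mathbb{E}[S^{(\mathrm{sk})}(T)] < \mathbb{E}[S^{(\mathrm{i})}(T)]$.

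For the Russo-Vallois side the same proof of Theorem~\ref{mainth} yields
\begin{equation*}
\mathbb{E}[S^{(\mathrm{rs})}(T)] = M(1-q)\, e^{\rho T} + M\, \mathbb{E}[\mathbbm{1}_A Y],
\end{equation*}
and the trivial split $\mathbb{E}[Y] = \mathbb{E}[\mathbbm{1}_A Y] + \mathbb{E}[\mathbbm{1}_{A^c} Y]$ rewrites this as
\begin{equation*}
\mathbb{E}[S^{(\mathrm{rs})}(T)] - \mathbb{E}[S^{(\mathrm{i})}(T)] = M(1-q)\, e^{\rho T} - M\, \mathbb{E}[\mathbbm{1}_{A^c} Y].
\end{equation*}
The key step is to note that by the very definition of $A$ we have $Y \le e^{\rho T}$ pointwise on $A^c$, with strict inequality on a set of positive measure (since $Y$ admits a density). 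Consequently $\mathbb{E}[\mathbbm{1}_{A^c} Y] < (1-q)\, e^{\rho T}$, which gives $\mathbb{E}[S^{(\mathrm{rs})}(T)] > \mathbb{E}[S^{(\mathrm{i})}(T)]$.

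I expect the main obstacle to be nothing more than recognizing this pathwise bound on $A^c$: it is one line but it carries the whole financial content of the statement, since it expresses that the insider's choice to hold the bond on $A^c$ is dominant trajectory by trajectory and not merely in expectation. Once this observation is made, both inequalities collapse to short one-line comparisons, and in particular no direct manipulation of the explicit error-function formulae from Theorem~\ref{mainth} is required.
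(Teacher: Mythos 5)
Your proposal is correct and follows essentially the same route as the paper: the Skorokhod bound is the observation that $\mathbb{E}[S^{(\text{sk})}(T)]$ is a strict convex combination of $e^{\rho T}$ and $e^{\mu T}$ (the paper phrases this via $|\mathrm{erf}|<1$, you via $q\in(0,1)$, which is the same fact), and the Russo--Vallois bound rests on exactly the pathwise domination $Y\le e^{\rho T}$ on $A^c$ that the paper uses before recombining the two indicator terms into $M\,\mathbb{E}[Y]=Me^{\mu T}$. Your write-up merely reorganizes the paper's chain of (in)equalities as a difference; no substantive step differs.
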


\begin{proof}
From our previous results it it clear that
$$
\mathbb{E}[S^{(\text{i})}(T)] = M e^{\mu T}
$$
and
\begin{eqnarray}\nonumber
\mathbb{E}[S^{(\text{sk})}(T)] &=& \frac{M}{2} \left\{ 1 + \text{erf} \left[ \frac{(\sigma^2 + 2 \rho - 2 \mu)\sqrt{T}}{2 \sqrt{2} \, \sigma} \right] \right\}
e^{\rho T} \\ \nonumber & & + \,
\frac{M}{2} \left\{ 1 - \text{erf} \left[ \frac{(\sigma^2 + 2 \rho - 2 \mu)\sqrt{T}}{2 \sqrt{2} \, \sigma} \right] \right\} e^{\mu T}.
\end{eqnarray}
The inequality
\begin{eqnarray}\nonumber
M e^{\mu T}&>& \frac{M}{2} \left\{ 1 + \text{erf} \left[ \frac{(\sigma^2 + 2 \rho - 2 \mu)\sqrt{T}}{2 \sqrt{2} \, \sigma} \right] \right\} e^{\rho T}
\\ \nonumber & & + \,
\frac{M}{2} \left\{ 1 - \text{erf} \left[ \frac{(\sigma^2 + 2 \rho - 2 \mu)\sqrt{T}}{2 \sqrt{2} \, \sigma} \right] \right\} e^{\mu T},
\end{eqnarray}
whenever $\mu > \rho$, follows directly from the definition of the error function~\cite{as}.

On the other hand, from the proof of Theorem~\ref{mainth} we find that
\begin{eqnarray}\nonumber
\mathbb{E}[S^{(\text{rs})}(T)]
&=& M \, \mathbb{E}\left[ \mathlarger{\mathlarger{\mathbbm{1}}} \{B_T \le (\rho -\mu +\sigma^2/2)T/\sigma\} \, e^{\rho T} \right] \\ \nonumber
& & + \, M \, \mathbb{E} \left[ \mathlarger{\mathlarger{\mathbbm{1}}} \{B_T > (\rho -\mu +\sigma^2/2)T/\sigma\}
\exp \left\{ \left( \mu -\frac{\sigma^2}{2} \right) T + \sigma B_T \right\} \right] \\ \nonumber
&=& M \, \mathbb{E}\left[ \mathlarger{\mathlarger{\mathbbm{1}}} \{B_T < (\rho -\mu +\sigma^2/2)T/\sigma\} \, e^{\rho T} \right] \\ \nonumber
& & + \, M \, \mathbb{E} \left[ \mathlarger{\mathlarger{\mathbbm{1}}} \{B_T > (\rho -\mu +\sigma^2/2)T/\sigma\}
\exp \left\{ \left( \mu -\frac{\sigma^2}{2} \right) T + \sigma B_T \right\} \right] \\ \nonumber
&>& M \, \mathbb{E}\left[ \mathlarger{\mathlarger{\mathbbm{1}}} \{B_T < (\rho -\mu +\sigma^2/2)T/\sigma\}
\exp \left\{ \left( \mu -\frac{\sigma^2}{2} \right) T + \sigma B_T \right\} \right] \\ \nonumber
& & + \, M \, \mathbb{E} \left[ \mathlarger{\mathlarger{\mathbbm{1}}} \{B_T > (\rho -\mu +\sigma^2/2)T/\sigma\}
\exp \left\{ \left( \mu -\frac{\sigma^2}{2} \right) T + \sigma B_T \right\} \right] \\ \nonumber
&=& M \, \mathbb{E}\left[ \mathlarger{\mathlarger{\mathbbm{1}}} \{B_T \le (\rho -\mu +\sigma^2/2)T/\sigma\}
\exp \left\{ \left( \mu -\frac{\sigma^2}{2} \right) T + \sigma B_T \right\} \right] \\ \nonumber
& & + \, M \, \mathbb{E} \left[ \mathlarger{\mathlarger{\mathbbm{1}}} \{B_T > (\rho -\mu +\sigma^2/2)T/\sigma\}
\exp \left\{ \left( \mu -\frac{\sigma^2}{2} \right) T + \sigma B_T \right\} \right] \\ \nonumber
&=& M \, \mathbb{E} \left[ \exp \left\{ \left( \mu -\frac{\sigma^2}{2} \right) T + \sigma B_T \right\} \right] \\ \nonumber
&=& M \, e^{\mu T}.
\end{eqnarray}
\end{proof}

Our results illustrate that the forward integral provides results with a clear financial meaning, at least in this context. On the other hand
the Skorokhod integral yields a result that is meaningless from the financial viewpoint, as the expected wealth of the
insider at the terminal time under this model is less than the corresponding wealth of the honest trader.

\section{Further results}

So far we have used the hypothesis $\mu > \rho$. This is a modeling assumption: the expected return of a risky investment should be higher than
that of a riskless investment in order to attract investors. From a mathematical viewpoint one can consider the reciprocal case $\rho > \mu$
and still obtain a result in the same line to that in the previous section. Note that in this new scenario the honest trader will obviously
choose the strategy $M_0=M$ and $M_1=0$, and the corresponding wealth will be $S(T)=M e^{\rho T}$.

\begin{theorem}
Let us denote by $S^{(\text{i})}(t)$ the total wealth process corresponding to the initial value problems~\eqref{s0} subject to $M_0=M$
and~\eqref{s1} subject to $M_1=0$; also denote
by $S^{(\text{sk})}(t)$ and $S^{(\text{rs})}(t)$ the total wealth processes corresponding to the initial value problems~\eqref{rode1}-\eqref{rode2} and~\eqref{sko1}-\eqref{sko2}, and ~\eqref{rode1}-\eqref{rode2} and~\eqref{rv1}-\eqref{rv2}, respectively. Then
$$
\mathbb{E}[S^{(\text{sk})}(T)] < \mathbb{E}[S^{(\text{i})}(T)] < \mathbb{E}[S^{(\text{rs})}(T)],
$$
for any $M, \rho, \mu, \sigma, T \in \mathbb{R}^+$ with $\rho > \mu$.
\end{theorem}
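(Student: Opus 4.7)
The plan is to reuse the closed-form expressions derived in the proof of Theorem~\ref{mainth} (neither the Skorokhod formula nor the forward-integral formula used $\mu>\rho$ anywhere, so both remain valid when $\rho>\mu$) and then adapt the two inequality arguments from the previous theorem with obvious sign changes. By Theorem~\ref{inith} applied to the strategy $M_0=M$, $M_1=0$, we have $\mathbb{E}[S^{(\text{i})}(T)] = M e^{\rho T}$, so it suffices to prove
\[
\mathbb{E}[S^{(\text{sk})}(T)] < M e^{\rho T} < \mathbb{E}[S^{(\text{rs})}(T)].
\]

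For the left-hand inequality I would observe that the Skorokhod formula
\[
\mathbb{E}[S^{(\text{sk})}(T)] = \frac{M}{2}\bigl\{1+\text{erf}[A]\bigr\} e^{\rho T} + \frac{M}{2}\bigl\{1-\text{erf}[A]\bigr\} e^{\mu T},
\]
with $A=(\sigma^2+2\rho-2\mu)\sqrt{T}/(2\sqrt{2}\sigma)$, is a strict convex combination of $e^{\rho T}$ and $e^{\mu T}$ (since $\text{erf}(A)\in(-1,1)$ for any finite $A$, both weights lie in $(0,1)$). Because $\rho>\mu$ implies $e^{\rho T}>e^{\mu T}$, the convex combination is strictly dominated by $M e^{\rho T}$, which is exactly the desired bound.

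For the right-hand inequality I would follow the template used in the proof of the previous theorem, but now exploiting the opposite event. Starting from
\[
\mathbb{E}[S^{(\text{rs})}(T)] = M\,\text{Pr}\{B_T\le c\}\,e^{\rho T} + M\,\mathbb{E}\!\left[\mathlarger{\mathlarger{\mathbbm{1}}}\{B_T> c\}\exp\!\left\{\left(\mu-\tfrac{\sigma^2}{2}\right)T+\sigma B_T\right\}\right],
\]
with $c=(\rho-\mu+\sigma^2/2)T/\sigma$, note that on $\{B_T>c\}$ one has $\sigma B_T > (\rho-\mu+\sigma^2/2)T$, i.e.\ $(\mu-\sigma^2/2)T+\sigma B_T > \rho T$, so
\[
\mathlarger{\mathlarger{\mathbbm{1}}}\{B_T>c\}\exp\!\left\{\left(\mu-\tfrac{\sigma^2}{2}\right)T+\sigma B_T\right\} > \mathlarger{\mathlarger{\mathbbm{1}}}\{B_T>c\}\,e^{\rho T},
\]
pointwise (and strictly with positive probability). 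Taking expectations and recombining with the first term gives $\mathbb{E}[S^{(\text{rs})}(T)] > M\,\text{Pr}\{B_T\le c\}\,e^{\rho T} + M\,\text{Pr}\{B_T>c\}\,e^{\rho T} = M e^{\rho T}$.

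There is no real obstacle here: the whole point is that the structural argument of the previous theorem is symmetric in the roles of $\rho$ and $\mu$. The only thing to be careful about is using the correct inequality on the correct half-line, namely that the exponential $\exp\{(\mu-\sigma^2/2)T+\sigma B_T\}$ crosses the threshold $e^{\rho T}$ exactly at $B_T=c$, so that one can compare it with $e^{\rho T}$ (rather than $e^{\mu T}$, as in the previous proof) to extract a lower bound on the forward-integral expectation.
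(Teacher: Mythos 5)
Your proposal is correct and follows essentially the same route as the paper: the left inequality is the same convex-combination observation (the paper attributes it to the definition of the error function), and the right inequality uses exactly the paper's decomposition and the same pointwise bound $\exp\{(\mu-\sigma^2/2)T+\sigma B_T\} > e^{\rho T}$ on the event $\{B_T > c\}$. You merely spell out the justifications more explicitly than the paper does.
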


\begin{proof}
From Theorems~\ref{inith} and~\ref{mainth} it follows that
$$
\mathbb{E}[S^{(\text{i})}(T)] = S^{(\text{i})}(T) = M e^{\rho T}
$$
and
\begin{eqnarray}\nonumber
\mathbb{E}[S^{(\text{sk})}(T)] &=& \frac{M}{2} \left\{ 1 + \text{erf} \left[ \frac{(\sigma^2 + 2 \rho - 2 \mu)\sqrt{T}}{2 \sqrt{2} \, \sigma} \right] \right\}
e^{\rho T} \\ \nonumber & & + \,
\frac{M}{2} \left\{ 1 - \text{erf} \left[ \frac{(\sigma^2 + 2 \rho - 2 \mu)\sqrt{T}}{2 \sqrt{2} \, \sigma} \right] \right\} e^{\mu T}.
\end{eqnarray}
The inequality
\begin{eqnarray}\nonumber
M e^{\rho T}&>& \frac{M}{2} \left\{ 1 + \text{erf} \left[ \frac{(\sigma^2 + 2 \rho - 2 \mu)\sqrt{T}}{2 \sqrt{2} \, \sigma} \right] \right\} e^{\rho T}
\\ \nonumber & & + \,
\frac{M}{2} \left\{ 1 - \text{erf} \left[ \frac{(\sigma^2 + 2 \rho - 2 \mu)\sqrt{T}}{2 \sqrt{2} \, \sigma} \right] \right\} e^{\mu T},
\end{eqnarray}
for $\rho > \mu$, is a direct consequence of the definition of the error function~\cite{as}.

Now, from the proof of Theorem~\ref{mainth} we see that
\begin{eqnarray}\nonumber
\mathbb{E}[S^{(\text{rs})}(T)]
&=& M \, \mathbb{E}\left[ \mathlarger{\mathlarger{\mathbbm{1}}} \{B_T \le (\rho -\mu +\sigma^2/2)T/\sigma\} \, e^{\rho T} \right] \\ \nonumber
& & + \, M \, \mathbb{E} \left[ \mathlarger{\mathlarger{\mathbbm{1}}} \{B_T > (\rho -\mu +\sigma^2/2)T/\sigma\}
\exp \left\{ \left( \mu -\frac{\sigma^2}{2} \right) T + \sigma B_T \right\} \right] \\ \nonumber
&>& M \, \mathbb{E}\left[ \mathlarger{\mathlarger{\mathbbm{1}}} \{B_T \le (\rho -\mu +\sigma^2/2)T/\sigma\}
e^{\rho T} \right] \\ \nonumber
& & + \, M \, \mathbb{E} \left[ \mathlarger{\mathlarger{\mathbbm{1}}} \{B_T > (\rho -\mu +\sigma^2/2)T/\sigma\}
e^{\rho T} \right] \\ \nonumber
&=& M \, \mathbb{E} \left[ e^{\rho T} \right] \\ \nonumber
&=& M \, e^{\rho T}.
\end{eqnarray}
\end{proof}

The marginal case $\mu = \rho$ can be analyzed along the same way. Then the expected wealth of the honest trader will be the same
independently of the initial strategy employed: $\mathbb{E}[S^{(\text{i})}(T)] = M e^{\rho T}$. In this case the following result follows.

\begin{theorem}
For $\mu= \rho$ the expected values of the total wealth processes fulfil
\begin{eqnarray}\nonumber
\mathbb{E}[S^{(\text{i})}(T)] &=& M e^{\rho T}, \\ \nonumber
\mathbb{E}[S^{(\text{sk})}(T)] &=& M e^{\rho T}, \\ \nonumber
\mathbb{E}[S^{(\text{rs})}(T)] &=& M \left[ 1 + \text{erf} \left( \frac{\sigma \, \sqrt{T}}{2 \sqrt{2}} \right) \right] e^{\rho T},
\end{eqnarray}
and therefore
$$
\mathbb{E}[S^{(\text{sk})}(T)] = \mathbb{E}[S^{(\text{i})}(T)] < \mathbb{E}[S^{(\text{rs})}(T)].
$$
\end{theorem}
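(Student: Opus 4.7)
The plan is essentially to specialize the formulas already established in Theorems~\ref{inith} and~\ref{mainth} to the case $\mu=\rho$, and then to read off the inequality from elementary positivity of the error function.

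First I would handle the honest trader. By Theorem~\ref{inith}, $\mathbb{E}[S^{(\text{i})}(T)] = M_0\,e^{\rho T} + M_1\,e^{\mu T}$; at $\mu = \rho$ this collapses to $(M_0+M_1)\,e^{\rho T} = M\,e^{\rho T}$ for \emph{any} admissible split. This is in fact the content of the preamble to the theorem: the honest trader is indifferent between strategies in the marginal case, and every strategy yields the same expectation. It is worth noting this explicitly, since the statement $\mathbb{E}[S^{(\text{i})}(T)] = M e^{\rho T}$ must hold regardless of which rule we impose for breaking the tie.

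Next I would specialize the two expressions in Theorem~\ref{mainth}. Substituting $\mu = \rho$, both arguments $(\sigma^2 + 2\rho - 2\mu)\sqrt{T}/(2\sqrt{2}\,\sigma)$ and $(\sigma^2 - 2\rho + 2\mu)\sqrt{T}/(2\sqrt{2}\,\sigma)$ collapse to the same positive quantity $\sigma\sqrt{T}/(2\sqrt{2})$, and the two exponential factors $e^{\rho T}$ and $e^{\mu T}$ become equal. In the Skorokhod formula the error terms then enter with opposite signs and cancel, leaving $M e^{\rho T}$. In the Russo--Vallois formula they enter with the same sign and add, yielding
\begin{equation}\nonumber
\mathbb{E}[S^{(\text{rs})}(T)] = M\left[\,1 + \text{erf}\!\left(\frac{\sigma\sqrt{T}}{2\sqrt{2}}\right)\right] e^{\rho T}.
\end{equation}

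Finally, the ordering is immediate: the first two expectations coincide by the preceding two steps, while the strict inequality $\mathbb{E}[S^{(\text{sk})}(T)] < \mathbb{E}[S^{(\text{rs})}(T)]$ follows from the fact that $\text{erf}$ is strictly positive on $(0,\infty)$ and $\sigma\sqrt{T}/(2\sqrt{2})>0$ since $\sigma,T \in \mathbb{R}^+$. I do not anticipate any real obstacle; the entire argument reduces to routine substitution into previously proven identities together with the elementary positivity of $\text{erf}$ on the positive half-line. The only conceptual point worth emphasizing in the write-up is the strategy-independence of the honest trader at $\mu=\rho$, which is what allows Theorem~\ref{inith} to be invoked without specifying a particular $(M_0,M_1)$.
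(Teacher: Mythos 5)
Your proposal is correct and follows essentially the same route as the paper: specialize Theorems~\ref{inith} and~\ref{mainth} to $\mu=\rho$, observe that the error-function terms cancel in the Skorokhod formula and add in the Russo--Vallois formula, and conclude the strict inequality from the positivity of $\mathrm{erf}$ on the positive half-line. The remark on the strategy-independence of the honest trader at $\mu=\rho$ matches the paper's preamble to the theorem.
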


\begin{proof}
By Theorem~\ref{inith}, for the honest trader we compute
\begin{eqnarray}\nonumber
\mathbb{E}[S^{(\text{i})}(T)] &=& M_0 e^{\rho T} + M_1 e^{\mu T}, \\ \nonumber
&=& \left( M_0 + M_1 \right) e^{\rho T}, \\ \nonumber
&=& M e^{\rho T}.
\end{eqnarray}
From Theorem~\ref{mainth} we find for the Skorokhod insider:
\begin{eqnarray}\nonumber
\mathbb{E}[S^{(\text{sk})}(T)] &=& \frac{M}{2} \left\{ 1 + \text{erf} \left[ \frac{(\sigma^2 + 2 \rho - 2 \mu)\sqrt{T}}{2 \sqrt{2} \, \sigma} \right] \right\} e^{\rho T}
\\ \nonumber & & + \,
\frac{M}{2} \left\{ 1 - \text{erf} \left[ \frac{(\sigma^2 + 2 \rho - 2 \mu)\sqrt{T}}{2 \sqrt{2} \, \sigma} \right] \right\} e^{\mu T} \\ \nonumber
&=& \frac{M}{2} \left[ 1 + \text{erf} \left( \frac{\sigma \, \sqrt{T}}{2 \sqrt{2}} \right) \right] e^{\rho T} + \,
\frac{M}{2} \left[ 1 - \text{erf} \left( \frac{\sigma \, \sqrt{T}}{2 \sqrt{2} \, \sigma} \right) \right] e^{\rho T} \\ \nonumber
&=& M e^{\rho T}.
\end{eqnarray}
Again from Theorem~\ref{mainth} we can compute the expected wealth of the Russo-Vallois insider; it is
\begin{eqnarray}\nonumber
\mathbb{E}[S^{(\text{rs})}(T)] &=& \frac{M}{2} \left\{ 1 + \text{erf} \left[ \frac{(\sigma^2 + 2 \rho - 2 \mu)\sqrt{T}}{2 \sqrt{2} \, \sigma} \right] \right\} e^{\rho T}
\\ \nonumber & & + \,
\frac{M}{2} \left\{ 1 + \text{erf} \left[ \frac{(\sigma^2 - 2 \rho + 2 \mu)\sqrt{T}}{2 \sqrt{2} \, \sigma} \right] \right\} e^{\mu T} \\ \nonumber
&=& M \left[ 1 + \text{erf} \left( \frac{\sigma \, \sqrt{T}}{2 \sqrt{2}} \right) \right] e^{\rho T}.
\end{eqnarray}
Then the equality $\mathbb{E}[S^{(\text{sk})}(T)] = \mathbb{E}[S^{(\text{i})}(T)]$ is immediate
and the inequality $\mathbb{E}[S^{(\text{i})}(T)] < \mathbb{E}[S^{(\text{rs})}(T)]$ is a simple consequence of the definition
of the error function~\cite{as}.
\end{proof}

The results in this section, although they are perhaps of a weaker financial meaning, again reveal the same fact:
the Skorokhod integral, when used to model insider trading, presents paradoxes that are not present in the models interpreted
according to the Russo-Vallois forward integral.

\section{Outlook}

Making precise a stochastic differential equation model by means of choosing a suitable stochastic integral is a topic that has
received much attention in the physical literature~\cite{mmcc}. This choice does not usually change the well-posedness of the problem,
but may modify abruptly the dynamics of the equation. Therefore the selection should be based on modeling assumptions, and of course
any particular choice is strongly model-dependent. While historically the discussion has focused on the non-anticipating framework and
the It\^o/Stratonovich duality, there is nothing substantially different between this case and the anticipating one in this respect.
Therefore the question of interpreting a given anticipating stochastic differential equation in the Skorokhod or Russo-Vallois sense
falls in this category. Our present results point to the fact that the Russo-Vallois forward integral is well-adapted for modeling insider
trading in a financial market, but the Skorokhod integral is not suitable for this purpose. This of course does not affect the fact that
both types of anticipating stochastic differential equation are well-defined, and that presumably the ``Skorokhod interpration of noise''
will be of use in other applications, be them financial, physical, or yet others. Time will reveal which anticipating stochastic integrals are
useful in different applications, just like the applications the It\^o and Stratonovich stochastic integrals
are useful for have been revealed along the years.

\section*{Acknowledgements}

The author is grateful to Bernt {\O}ksendal and Olfa Draouil for helpful discussions and comments.
This work has been partially supported by the Government of Spain (Ministry of Economy, Industry and Competitiveness) through Project MTM2015-72907-EXP.

\vskip5mm
\noindent
{\footnotesize
Carlos Escudero\par\noindent
Departamento de Matem\'aticas\par\noindent
Universidad Aut\'onoma de Madrid\par\noindent
{\tt carlos.escudero@uam.es}\par\vskip1mm\noindent
}
\end{document}